
\documentclass{amsart}

\usepackage{enumerate}
\usepackage{mathptmx,stmaryrd}

\usepackage{hyperref}
\usepackage[nobysame]{amsrefs}

\usepackage[all]{xy}
\SelectTips{cm}{}

\theoremstyle{plain}

\newtheorem{theorem}{Theorem}[section]
\newtheorem{lemma}[theorem]{Lemma}
\newtheorem{corollary}[theorem]{Corollary}

\theoremstyle{definition}
\newtheorem{definition}[theorem]{Definition}

\theoremstyle{remark}
\newtheorem{remark}[theorem]{Remark}


\newcommand{\bbZ}{\mathbb{Z}}

\newcommand{\mcG}{\mathcal{G}}

\newcommand{\sfA}{\mathsf{A}}
\newcommand{\sfB}{\mathsf{B}}
\newcommand{\sfC}{\mathsf{C}}
\newcommand{\sfD}{\mathsf{D}}
\newcommand{\sfP}{\mathsf{P}}
\newcommand{\sfT}{\mathsf{T}}

\newcommand{\mrE}{\mathrm{E}}
\newcommand{\mrD}{\mathrm{D}}
\newcommand{\mrG}{\mathrm{G}}

\newcommand{\proj}{\operatorname{\mathsf{proj}}}
\newcommand{\cone}{\operatorname{\mathsf{cone}}}
\newcommand{\susp}{\mathsf{\Sigma}}

\newcommand{\dbcat}{\mathsf{D}^{\mathsf b}}

\newcommand{\lra}{\longrightarrow}
\newcommand{\xra}{\xrightarrow}


\newcommand{\add}{\operatorname{add}}
\renewcommand{\dim}{{\operatorname{dim}\,}}
\newcommand{\dia}{{\,\diamond\,}}
\newcommand{\End}{\operatorname{End}}
\newcommand{\Ext}{\operatorname{Ext}}
\newcommand{\kos}[2]{{#2/\!\!/#1}}
\newcommand{\modf}{{\operatorname{mod}\,}}
\newcommand{\cent}[1]{{#1}^{\!\mathsf c}}

\newcommand{\Hom}{\operatorname{Hom}}

\begin{document}

\title[Annihilation and decompositions]{Annihilation of cohomology and\\
decompositions of derived categories}

\author{Srikanth B. Iyengar}
\address{Department of Mathematics, University of Nebraska, Lincoln, NE 68588-0130, USA}
\email{s.b.iyengar@unl.edu}

\author{Ryo Takahashi}
\address{Graduate School of Mathematics, Nagoya University, Furocho, Chikusaku, Nagoya 464-8602, Japan}
\email{takahashi@math.nagoya-u.ac.jp}

\date{2nd August 2014}

\subjclass[2010]{16E30, 16E35, 18G25}
\keywords{cohomology annihilator, derived category, projective class}

\thanks{ SBI was partly supported by NSF grant DMS-1201889; RT was partly supported by JSPS Grant-in-Aid for Young Scientists (B) 22740008, JSPS Grant-in-Aid for Scientific Research (C) 25400038 and JSPS Postdoctoral Fellowships for Research Abroad.}

\begin{abstract}
It is proved that an element $r$ in the center of a coherent ring $\Lambda$ annihilates $\mathrm{Ext}^{n}_{\Lambda}(M,N)$, for some positive integer $n$ and all finitely presented $\Lambda$-modules $M$ and $N$, if and only if the bounded derived category of $\Lambda$ is an extension of the subcategory consisting of complexes annihilated by $r$ and those obtained as $n$-fold extensions of $\Lambda$. This has applications to finiteness of dimension of derived categories.
\end{abstract}

\maketitle

\section{Introduction}

Let $\Lambda$ be a right coherent ring, $\modf\Lambda$ the category of finitely presented right $\Lambda$-modules, and $\dbcat(\Lambda)$  its bounded derived category. The purpose of this note is to prove the result below that reveals a close link between the existence of uniform annihilators of Ext-modules,  as modules over the center $\cent\Lambda$ of $\Lambda$, and a  kind of decomposition of the derived category. In the statement, $\mcG$ is the class of morphisms in $\dbcat(\Lambda)$ that induce the zero map in cohomology; $r$ an element in $\cent\Lambda$; and
$\dbcat(\Lambda)_{r}$ consists of complexes $X$ with $r \Ext^{0}_{\Lambda}(X,X)=0$, whilst $\sfC \dia \sfD $ is the subcategory of complexes obtained as extensions of complexes in $\sfC$ and $\sfD$;  see \ref{def:diamond}.

\begin{theorem}
Fix a non-negative integer $n$ and an element $r$  in $\cent\Lambda$. The following conditions on $\dbcat(\Lambda)$  are equivalent.
\begin{enumerate}[{\quad\rm(1)}]
\item
$r\mcG^n=0$
\item
$\dbcat(\Lambda)=\dbcat(\Lambda)_r\dia \{\Lambda\}^{n\dia}$
\item
$\dbcat(\Lambda)=\{\Lambda\}^{n\dia}\dia \dbcat(\Lambda)_r$
\end{enumerate}
When they hold $r\Ext_{\Lambda}^n(\modf \Lambda,\modf \Lambda)=0$; conversely the latter condition gives $r^{3}\mcG^{2n}=0$.
\end{theorem}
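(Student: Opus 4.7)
The plan is to deduce the equivalence $(1)\Leftrightarrow(2)\Leftrightarrow(3)$ via an auxiliary \emph{Ghost Lemma}: if $B\in \{\Lambda\}^{n\dia}$, then every composition of $n$ ghost morphisms out of $B$ vanishes. I would prove this by induction on $n$; the base $n=1$ uses the identification $\Hom_{\dbcat(\Lambda)}(\susp^i\Lambda,-)=H^{-i}(-)$, which forces a single ghost out of a shift of $\Lambda$ to be zero. The inductive step exploits a defining triangle $B_1\to B\to B_2$ with $B_1\in\{\Lambda\}$ and $B_2\in\{\Lambda\}^{(n-1)\dia}$: restricting the leading ghost of an $n$-fold composition to $B_1$ produces a single ghost that vanishes by the base case, so that ghost factors through $B_2$, and the remaining $(n-1)$-fold composition out of $B_2$ vanishes by induction. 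The key ingredient used throughout is that $\mcG$ is a two-sided ideal of morphisms: composing a ghost with any morphism yields a ghost.

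The easy implications $(2),(3)\Rightarrow(1)$ then follow. If $X$ sits in a triangle $A\to X\to B$ as in $(2)$, with $r\cdot 1_A=0$ and $B\in\{\Lambda\}^{n\dia}$, then centrality of $r$ gives $(r\cdot 1_X)\circ\iota=\iota\circ(r\cdot 1_A)=0$ for $\iota\colon A\to X$, so $r\cdot 1_X$ factors through $B$. Any $\phi\in\mcG^n$ with source $X$ therefore satisfies $r\phi=\phi\circ(r\cdot 1_X)$ factoring through $B$ via a morphism still in $\mcG^n$ (by the ideal property), whence $r\phi=0$ by the Ghost Lemma. The dual argument, with $r\cdot 1_X$ factoring through $B$ from the left, gives $(3)\Rightarrow(1)$.

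The harder implication $(1)\Rightarrow(2),(3)$ is where the work lies. For each $X$ I would build iteratively a ghost tower $X=X_0\to X_1\to\cdots\to X_n$ by forming triangles $P_i\to X_i\to X_{i+1}$, with $P_i$ a finite direct sum of shifts of $\Lambda$ surjecting onto $H^*(X_i)$; each $X_i\to X_{i+1}$ is then a ghost. An octahedral assembly of these triangles yields a triangle $Y\to X\to X_n$ with $Y\in\{\Lambda\}^{n\dia}$ and the composite $\alpha\colon X\to X_n$ in $\mcG^n$, so $(1)$ forces $r\alpha=0$. To replace $X_n$ by an object of $\dbcat(\Lambda)_r$, I would form the Koszul cone $A:=\cone(r\cdot 1_{X_n})$, on which $r$ acts null-homotopically, so $A\in\dbcat(\Lambda)_r$. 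A further octahedral manipulation combining the triangles $Y\to X\to X_n$ and $X_n\to A\to \susp X_n$ should produce the desired triangle $B\to X\to A$ with $B\in\{\Lambda\}^{n\dia}$, giving $(3)$; a dual construction using a cotower yields $(2)$. I expect the main technical obstacle to be the careful bookkeeping needed to verify that the nested octahedra leave the resulting $B$ inside $\{\Lambda\}^{n\dia}$ and not merely in some larger extension subcategory.

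For the $\Ext$-statement, any $\eta\in\Ext^n_\Lambda(M,N)$, represented by a Yoneda $n$-fold extension, factors in $\dbcat(\Lambda)$ as a composition of $n$ connecting maps between shifted modules; each such map is a ghost for cohomological-degree reasons, so $\eta\in\mcG^n$ and $r\eta=0$ under $(1)$. For the converse, given $\phi\in\mcG^{2n}$ I would split $\phi=\psi_2\psi_1$ at its midpoint with each $\psi_i\in\mcG^n$, and then use bounded truncation triangles of source, midpoint, and target to replace these complexes by their module cohomologies, each such replacement consuming one factor of $r$ via the hypothesis $r\Ext^n(\modf\Lambda,\modf\Lambda)=0$; this should yield $r^3\phi=0$. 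Coordinating the three applications of the hypothesis with the amplitudes of the truncations is the main delicacy of this last step.
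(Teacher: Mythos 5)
Your handling of the equivalence of (1)--(3) and of the forward Ext statement is essentially the paper's: the Ghost Lemma, the $n$-fold ghost tower producing a triangle $P\to X\xra{q}X_n$ with $P\in\{\Lambda\}^{n\dia}$ and $q\in\mcG^n$, the Koszul object $\kos r{X_n}$, and an octahedron applied to a vanishing composite. Two remarks dispose of the bookkeeping you flag. You do not need a dual cotower for (2) (awkward anyway, since $\modf\Lambda$ need not have enough injectives): the single tower yields both (2) and (3) by applying the octahedral axiom to the two factorizations $(r1_{X_n})\circ q$ and $q\circ(r1_X)$ of the zero morphism. And the octahedron does not hand you a triangle $B\to X\to \kos r{X_n}$ on the nose; it identifies the cone of the zero composite with $X_n\oplus\susp X$ and places that direct sum in a triangle with outer terms in $\{\Lambda\}^{n\dia}$ and $\dbcat(\Lambda)_r$, so $X$ is obtained only as a retract --- which suffices precisely because $\dia=\add(\ast)$ is closed under retracts.

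The genuine gap is in the converse implication $r\Ext^n_\Lambda(\modf\Lambda,\modf\Lambda)=0\Rightarrow r^3\mcG^{2n}=0$. Replacing the source, midpoint and target of $\phi=\psi_2\psi_1$ by their cohomology modules via truncation triangles costs one triangle --- hence, in your scheme, one factor of $r$ --- per cohomological degree: a complex of amplitude $a$ is an $a$-fold iterated extension of shifted modules, so this yields a power of $r$ growing with the amplitude, not the uniform $r^3$. What makes the count uniform is the triangle $Z^{*}(X)\to X\to\susp B^{*}(X)\to\susp Z^{*}(X)$, with cycles and boundaries regarded as complexes with zero differential (finitely presented by coherence): it exhibits \emph{every} bounded complex as a single extension of two objects of $\add(\modf\Lambda)$, independently of amplitude. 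The paper feeds this into the subcategory calculus, using Lemma~\ref{lem:symmetry} to commute a $\dbcat(\Lambda)_r$-factor past $\{\Lambda\}^{n\dia}$ at the cost of replacing $r$ by $r^2$, arrives at $\dbcat(\Lambda)=\dbcat(\Lambda)_{r^3}\dia\{\Lambda\}^{2n\dia}$, and then quotes the already-proved implication (2)$\Rightarrow$(1). A morphism-level version of your plan does work once truncations are replaced by the cycles--boundaries triangle: the hypothesis, Remark~\ref{rem:syzygy} and the Ghost Lemma give $rg=0$ for every $g\in\mcG^{n}$ whose source lies in $\add(\modf\Lambda)$; applying this to $\psi_1\circ\bigl(Z^{*}(X)\to X\bigr)$ shows that $r\psi_1$ factors through $X\to\susp B^{*}(X)$, and applying it once more to the composite of the resulting map with $\psi_2$ gives $r^{2}\phi=0$ --- two factors of $r$, not three, so in particular the stated $r^{3}\mcG^{2n}=0$.
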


This  result is a consequence of Theorem~\ref{thm:main} that applies to  abelian categories with enough projectives. In fact, the equivalence of conditions (1)--(3), and the proofs, carry over verbatim to generating projective classes in triangulated categories, in the sense of Christensen~\cite{Christensen98}; with Ext as in Section 4 of \emph{op.~cit.}, the entire statement carries over.

\medskip

Here is one application (see Corollary~\ref{cor:dim}) of the theorem above: If $r\in \cent\Lambda$ is a non-zerodivisor on $\Lambda$ and  satisfies $r\mcG^{n}=0$, then there is an inequality
\[
\dim \dbcat(\Lambda) \leq \dim \dbcat(\Lambda/r\Lambda) + n 
\]
concerning dimensions of the appropriate triangulated categories, in the sense of Rouquier \cite{Rouquier08a}. 
This inequality gives a way to deduce the finiteness of the dimension of the derived category of $\Lambda$ from that of the derived category of $\Lambda/r\Lambda$. The point is that the ring $\Lambda/r\Lambda$ is ``smaller'' than $\Lambda$; for example, the Krull dimension of $\cent{(\Lambda/r\Lambda)}$ is strictly smaller than that of $\cent\Lambda$. This approach is predicated on the existence of non-zero divisors that annihilate Ext-modules. For  results in this direction, see \cite{IyengarTakahashi14a}*{Section 7}.

\ack
We should like to thank the referee for suggestions concerning presentation.

\section{Decompositions}
We deduce the statement in the Introduction from Theorem~\ref{thm:main} below that concerns derived categories of abelian categories.

\begin{definition}
\label{def:diamond}
Let $\sfT$ be a triangulated category, and $\susp$ its suspension functor; soon we will focus on the derived category of an abelian category.

Let $\sfC$ be a subcategory (always assumed to be full) of $\sfT$. We write $\add(\sfC)$ for the smallest subcategory of $\sfT$ containing $\sfC$ and closed under finite direct sums, retracts, and shifts.  Given a subcategory $\sfD$ of $\sfT$, the subcategory consisting of objects $E$ that appear in exact triangles of the form
\[
C \to E \to D \to \susp C \quad\text{with $C\in\sfC$ and $D\in\sfD$},
\]
is denoted  $\sfC\ast\sfD$. It is convenient to introduce also the following notation:
\[
\sfC\dia\sfD := \add(\sfC\ast\sfD)\,.
\]
It is a consequence of the octahedral axiom that there  are equalities
\[
(\sfB\ast\sfC)\ast\sfD= \sfB\ast(\sfC\ast\sfD) \text{ and } (\sfB\dia\sfC)\dia\sfD= \sfB\dia(\sfC\dia\sfD)\,.
\]
In particular, we may well denote them $\sfB\ast\sfC\ast\sfD$ and $\sfB\dia\sfC\dia\sfD$, respectively.
\end{definition}

Throughout the rest of this section, $R$ will be a commutative ring.

\begin{definition}
An additive category $\sfA$ is said to be \emph{$R$-linear} if for each $A$ in $\sfA$ there are homomorphisms of rings
\[
\eta_{A}\colon R\to \End_{\sfA}(A)
\]
with the property that the action of $R$ on $\Hom_{\sfA}(A,B)$ induced by $\eta_{A}$ and $\eta_{B}$ coincide, for all $A,B$ in $\sfA$. Said otherwise, $\Hom_{\sfA}(A,B)$ is an $R$-module and this structure is compatible with compositions in $\sfA$. 
\end{definition}

Let $\sfA$  be an $R$-linear Abelian category. The category of complexes over $\sfA$ inherits an $R$-linear structure, as does the bounded derived category, $\dbcat(\sfA)$,  of $\sfA$. In either case, the action is compatible with the suspension, in that the morphisms $\susp(X\xra{r} X)$ and $\susp X \xra{r} \susp X$ coincide for all $r\in R$ and complexes $X$. What is used repeatedly in the sequel is that for any $r\in R$ and morphism $f\colon X\to Y$, in either category, there is an induced commutative square 
\[
\xymatrix{
X \ar@{->}[r]^{f} \ar@{->}[d]_{r} & Y \ar@{->}[d]^{r} \\
X \ar@{->}[r]^{f} & Y} 
\]

Henceforth, we assume that $\sfA$ has enough projective objects, and write $\proj \sfA$ for the corresponding subcategory.  For ease of notation, we  abbreviate
\begin{align*}
\sfT &:= \dbcat(\sfA) \\
\sfP_{n} & := \underset{\text{$n$ copies}}{\underbrace{\proj\sfA\, \dia\cdots \dia\, \proj\sfA }}\quad \text{for each $n\ge0$.}
\end{align*}
Recall that \emph{ghost} in $\sfT$ is a morphism $f\colon X\to Y$ such that 
\[
\Hom_{\sfT}(\susp^{n}P,f) = 0 \quad\text{for all $P$ in $\proj\sfA$ and $n\in\bbZ$.}
\]
In what follows, we write $\mcG$ for the class of ghosts; it is an ideal in $\sfT$.  For any integer $n$, the ideal $\mcG^n$ consists of morphisms that are $n$-fold compositions of ghosts.

\begin{remark}
\label{rem:ghosts}
For each non-negative integer $n$, one has
\[
\Hom_{\sfT}(P,g)=0\quad\text{for all $P\in\sfP_{n}$ and $g\in\mcG^n$}.
\]
This is the well-known Ghost Lemma; for a proof, see, for example, \cite{Kelly65}*{Theorem~3}.
\end{remark}

\begin{remark}
\label{rem:syzygy}
For each complex $X$ in $\sfT$ and integer $n\ge1$,  there is an exact triangle 
\[
P \xra{\ p\ } X \xra{\ q\ } Y \lra \susp P
\]
with $P$ in $\sfP_n$ and $q$ in $\mcG^{n}$; one can get this from, for instance, the construction of an Adams resolution of $X$; see \cite{Christensen98}*{Section 4}. When $X$ is in $\sfA$, such a triangle exists with $\susp^{-n}Y$ in $\sfA$.
\end{remark}

\begin{definition}
For $r\in R$, let $\sfT_r$ denote the subcategory of $\sfT$ consisting of complexes $X$ such that the multiplication morphism $X\xra{r}X$ is zero in $\sfT$; in other words, $r$ is in the kernel of the natural map $R\to \End_{\sfT}(X)$.
\end{definition}

\begin{remark}
\label{rem:ann}
Let $r,s$ be elements of $R$. In any exact triangle $X\to Y\to Z\to \susp X$ in $\sfT$, if $X\in\sfT_r$ and $Z\in\sfT_s$, then $Y\in\sfT_{rs}$ holds.

Indeed,  this is a well-known argument (analogous to one for the Ghost Lemma) contained in the commutative diagram below:
\[
\xymatrix{
 && Y\ar@{-->}[dl]\ar@{->}[d]^{s}  \ar@{->}[r]^{g} & Z \ar@{->}[d]^{s}  & \\
 &X\ar@{->}[r]^{f} \ar@{->}[d]^{r}  & Y\ar@{->}[d]^{r}\ar@{->}[r]^{g} &Z\ar@{->}[r]&\susp X \\
&X\ar@{->}[r]^{f} & Y&& 
}
\]
The squares in the diagram are commutative by the definition of the $R$-action on $\sfT$. The morphism $Y\to X$ exists because 
$gs = sg  =0$; the second equality holds as $Z$ is in $\sfT_{s}$. The morphism $Y\xra{rs}Y$ thus factors as 
$Y\to X\xra{r} X \xra{f} Y$ and hence is zero, for  $X$ is in $\sfT_{r}$. 
\end{remark}

In what follows, given a morphism $f\colon X\to Y$ of complexes over $\sfA$, its mapping cone is denoted $\cone(f)$; thus
\[
\cone(f)^{n} := Y^{n}\bigoplus X^{n+1}\quad\text{with differential } \begin{bmatrix} d^{Y} & f \\ 0 &- d^{X}  \end{bmatrix}
\]
The canonical exact sequence of complexes
\[
0\lra Y\lra \cone(f)\lra \susp X\lra 0
\]
gives rise to an exact triangle $X\xra{f}Y\to \cone(f) \to \susp X$ in $\sfT$.

\begin{remark}
\label{rem:koszul}
For $r\in R$ and complex $X$ over $\sfA$, set $\kos rX:=\cone(X\xra{r}X)$. Observe that $\kos rX$ is in $\sfT_{r}$ for the map
\[
\begin{bmatrix} 0 & 0 \\ 1 & 0 \end{bmatrix}\colon \kos rX \lra \kos rX
\]
defines a homotopy between multiplication by $r$ and the zero morphism.
\end{remark}

\begin{lemma}
\label{lem:symmetry}
For each subcategory $\sfC$ of\, $\sfT$ and element $r\in R$ there are inclusions
\[
\sfT_r\ast\sfC\subseteq\sfC\ast\sfT_{r^2}\quad\text{and}\quad \sfC\ast\sfT_r\subseteq\sfT_{r^2}\ast\sfC\,.
\]
\end{lemma}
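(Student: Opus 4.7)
My plan is to prove the first inclusion $\sfT_r\ast\sfC\subseteq\sfC\ast\sfT_{r^2}$; the second follows by the dual argument in the opposite triangulated category. So fix $E\in\sfT_r\ast\sfC$, realised by an exact triangle
\[
A \xra{\ a\ } E \xra{\ e\ } C \xra{\ c\ } \susp A
\]
with $A\in\sfT_r$ and $C\in\sfC$. The target is a triangle $C\to E\to B\to\susp C$ with $B\in\sfT_{r^2}$.

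The key observation is that because $r_A=0$, the compatibility square in the excerpt yields $r_E\circ a = a\circ r_A = 0$, so $r_E\colon E\to E$ factors through $e$. Concretely, applying the octahedral axiom to the composition $A\xra{a}E\xra{r}E$ (whose total composite is zero) provides a distinguished triangle
\[
C \xra{\ (h,\,c)\ } E\oplus \susp A \lra \kos r E \lra \susp C,
\]
in which the map $A\xra{0}E$ has cone $E\oplus\susp A$, and the first component $h\colon C\to E$ of the left-hand morphism is forced to satisfy $h\circ e = r_E$. I will take $B:=\cone(h)$, so that $C\xra{h}E\to B\to\susp C$ is the candidate triangle exhibiting $E\in\sfC\ast\sfT_{r^2}$.

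It remains to show $B\in\sfT_{r^2}$. Here I apply the octahedral axiom a second time, to the composition
\[
C \xra{(h,\,c)} E\oplus \susp A \xra{\ p\ } E,
\]
where $p$ is the canonical projection. The composite is $h$, the triangle over $p$ is split with third term $\susp^{2}A$, and the triangle over the first map is the one produced in the previous step. The octahedral triangle therefore reads
\[
\kos r E \lra B \lra \susp^{2}A \lra \susp \kos r E.
\]
By Remark \ref{rem:koszul} the Koszul object $\kos r E$ lies in $\sfT_r$, and $\susp^{2}A\in\sfT_r$ since the $R$-action commutes with $\susp$. Remark \ref{rem:ann} then yields $B\in\sfT_{r\cdot r}=\sfT_{r^{2}}$, completing the argument.

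The main obstacle, and the reason the exponent jumps from $r$ to $r^{2}$, is that a head-on attempt to verify $r_B^{2}=0$ for $B=\cone(h)$ loops back to itself: writing $r_B=k\circ d$ using $r_B\circ b=0$ and composing gives relations involving $r_B$ on both sides. The two-step octahedral construction circumvents this by presenting $B$ as an extension whose outer terms both already annihilate $r$, reducing everything to the already-established Remark \ref{rem:ann}. For the second inclusion $\sfC\ast\sfT_r\subseteq\sfT_{r^2}\ast\sfC$, I would run the dual construction: starting from $C\to E\to A\to\susp C$, one obtains $\widetilde h\colon E\to C$ with $f\circ\widetilde h = r_E$ by applying the octahedron to $E\xra{r}E\xra{g}A$, and then a second octahedron exhibits the fibre of $\widetilde h$ as an extension of $\susp^{-1}\kos r E$-shift type by a suspension of $A$, both in $\sfT_r$.
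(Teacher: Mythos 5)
Your argument is correct and follows essentially the same route as the paper's: both proofs produce a morphism $h\colon C\to E$ by factoring a multiplication-by-$r$ map through the given triangle, and then use the octahedral axiom together with Remarks~\ref{rem:koszul} and \ref{rem:ann} to exhibit $\cone(h)$ as an extension of two objects of $\sfT_r$, hence an object of $\sfT_{r^2}$, with the triangle $C\xra{h}E\to\cone(h)\to\susp C$ giving the desired membership in $\sfC\ast\sfT_{r^2}$. The only (harmless) difference is in which map is factored: the paper uses $c\circ r_C=0$ to write $r_C=e\circ h$, so a single octahedron applied to $C\xra{h}E\xra{e}C$ yields $A\to\cone(h)\to\kos{r}{C}$, whereas you use $r_E\circ a=0$ to write $r_E=h\circ e$, which costs a second octahedron and produces the extension $\kos{r}{E}\to\cone(h)\to\susp^{2}A$ instead.
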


\begin{proof}
We verify the first inclusion; the second one can be checked along the same lines.

Fix an $X$ in $\sfT_r\ast\sfC$. Thus, there exist $T\in\sfT_r$ and $C\in\sfC$ and an exact triangle in the top row of the following diagram:
\[
\xymatrix{
T\ar@{->}[r] & X \ar@{->}[r]^{f}  & C\ar@{->}[r]^{g} \ar@{-->}[dl]_{h}\ar@{->}[d]^{r}& \susp T \ar@{->}[d]^{r} \\
                   & X \ar@{->}[r]_{f}  & C\ar@{->}[r]_{g} & \susp T }
\]
The map $h$ exists because $gr = rg=0$, where the second equality holds because $T$ is in $\sfT_{r}$. By the octahedral axiom, the factorization $r=fh$ gives rise to an exact triangle 
\[
T\lra \cone(h) \lra \kos rC \lra
\]
It follows from Remarks~\ref{rem:ann} and \ref{rem:koszul} that $r^{2}$ annihilates $\cone(h)$. It remains to notice the exact triangle $C\lra X \lra \cone(h)\to \susp C$.
\end{proof}

\begin{definition}
For an element $r\in R$ and an integer $n\ge0$ we consider the following four conditions on the triangulated category $\sfT:=\dbcat(\sfA)$.
\begin{alignat*}{2}
&\mrD_{r,n}\quad \sfT =\sfT_r\dia \sfP_n\,, \quad\text{and}\quad 
&&\mrE_{r,n}\quad \ r\Ext_\sfA^n(\sfA,\sfA)=0\,,  \\
&\mrD'_{r,n}\quad \sfT =\sfP_n \dia \sfT_r\,, \quad\text{and}\quad 
&&\mrG_{r,n}\quad  r\mcG^n=0\,.
\end{alignat*}
\end{definition}

The statement from the introduction is a consequence of the following theorem. 

\begin{theorem}
\label{thm:main}
The following implications hold
\[
\mrD'_{r,n}\Longleftrightarrow \mrD_{r,n}  \Longleftrightarrow  \mrG_{r,n}\Longrightarrow \mrE_{r,n} \Longrightarrow \mrD_{r^{3},2n}
\]
\end{theorem}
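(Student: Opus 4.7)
The plan is to split the chain into four implications. The workhorses will be the syzygy triangles of Remark~\ref{rem:syzygy}, the Ghost Lemma of Remark~\ref{rem:ghosts}, the Koszul cones of Remark~\ref{rem:koszul}, and the symmetry of Lemma~\ref{lem:symmetry}.

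The implication $\mrG_{r,n} \Rightarrow \mrE_{r,n}$ is immediate: any class in $\Ext^{n}_{\sfA}(A,B) = \Hom_{\sfT}(A,\susp^{n}B)$ with $A,B \in \sfA$ arises as an $n$-fold composition of connecting morphisms of a syzygy resolution, each of which is a ghost; hence it lies in $\mcG^{n}$ and is killed by $r$. The implications $\mrD_{r,n} \Rightarrow \mrG_{r,n}$ and $\mrD'_{r,n} \Rightarrow \mrG_{r,n}$ run along the same lines: for $g\colon X \to Y$ in $\mcG^{n}$, present $X$ as a summand (and shift) of an object $E$ sitting in a triangle $T \to E \to P \to \susp T$ (respectively, $P \to E \to T \to \susp P$) with $T \in \sfT_{r}$ and $P \in \sfP_{n}$; extend $g$ along the retraction to $\bar g\colon E \to Y$, apply the Ghost Lemma to see that the induced map out of $P$ vanishes, and conclude that $\bar g$ factors through $T$, whence $r\bar g = 0$ follows from $R$-linearity together with $r_{T}=0$, and restricting back yields $rg=0$.

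For $\mrG_{r,n} \Rightarrow \mrD'_{r,n}$, apply Remark~\ref{rem:syzygy} to produce a triangle $P \to X \xra{q} Y \to \susp P$ with $P \in \sfP_{n}$ and $q \in \mcG^{n}$. The hypothesis gives $rq=0$, and in particular $r_{Y}\circ q = 0$, so $q$ factors through $\susp^{-1}\kos{r}{Y} \in \sfT_{r}$ via the connecting morphism of the Koszul triangle from Remark~\ref{rem:koszul}. Forming the cone of this factorization and running the octahedral axiom against the original syzygy triangle assembles a new triangle that places $X$ in $\sfP_{n}\dia\sfT_{r}$, after the passage to summands and shifts built into $\dia$. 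A symmetric argument, using $q\circ r_{X}=0$ and the factorization of $q$ through $\kos{r}{X}$, handles $\mrG_{r,n} \Rightarrow \mrD_{r,n}$.

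Finally, for $\mrE_{r,n} \Rightarrow \mrD_{r^{3},2n}$: apply Remark~\ref{rem:syzygy} twice to obtain $P \to X \xra{q} Y \to \susp P$ with $P \in \sfP_{2n}$ and $q = q_{2}q_{1}$, where each $q_{i} \in \mcG^{n}$. When $X$ lies in $\sfA$, both $q_{i}$ are realized by $\Ext^{n}$-classes between objects of $\sfA$, so $rq_{i}=0$ by $\mrE_{r,n}$; the Koszul factorizations $q_{i} = \tilde q_{i} a_{i}$ then produce intermediate $\sfT_{r}$-pieces which, after being transposed past the $\sfP_{n}$-pieces using Lemma~\ref{lem:symmetry}, incur a single $r^{2}$-factor that combines with the additional $r$-factor harvested from $\mrE_{r,n}$ to account for the exponent $r^{3}$. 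The main obstacle I foresee is the combinatorial bookkeeping of $r$-powers and projective layers through the iterated Koszul and octahedral constructions, together with the reduction of general $X \in \dbcat(\sfA)$ to the case $X \in \sfA$ by an induction on amplitude.
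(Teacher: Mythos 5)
Your argument for $\mrG_{r,n}\Rightarrow\mrE_{r,n}$ and for $\mrD'_{r,n}\Rightarrow\mrG_{r,n}$ is sound (for a triangle $P\to E\to T\to\susp P$ the Ghost Lemma does kill $\bar g\circ(P\to E)$, so $\bar g$ factors through $T$ and $r\bar g=0$). Note, however, that your uniform ``respectively'' breaks for $\mrD_{r,n}\Rightarrow\mrG_{r,n}$: in the triangle $T\to E\to P\to\susp T$ there is no map out of $P$ into $E$ to which the Ghost Lemma applies, and $\bar g$ need not factor through $T$; there you must instead factor $r_E$ through $P$ (possible since $r_E\circ(T\to E)=(T\to E)\circ r_T=0$) and then kill the resulting composite $P\to E\xra{\bar g}Y$ by the Ghost Lemma. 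The serious gap is in $\mrG_{r,n}\Rightarrow\mrD'_{r,n}$. Factoring $q$ as $X\xra{d}\susp^{-1}\kos{r}{Y}\xra{e}Y$ and coning does not assemble into a proof: the octahedron on $q=ed$ yields a triangle $\cone(d)\to\susp P\to\cone(e)\cong Y\to\susp\cone(d)$, so $\cone(d)$ is built from $\susp P$ and the \emph{uncontrolled} object $Y$, and nothing exhibits $X$, up to shift and summand, as an object of $\sfP_n\ast\sfT_r$. The missing idea is to apply the octahedral axiom to the vanishing composite $X\xra{q}Y\xra{r}Y$ itself: since $\cone(0\colon X\to Y)=Y\oplus\susp X$, the octahedron gives an exact triangle $\susp P\to Y\oplus\susp X\to\kos{r}{Y}\to\susp^{2}P$, exhibiting $\susp X$ as a direct summand of an object of $\sfP_n\ast\sfT_r$ --- precisely where the $\add$-closure in the definition of $\dia$ enters. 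The composite $q\circ r_X=0$, treated the same way, gives $\mrD_{r,n}$.

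Your plan for $\mrE_{r,n}\Rightarrow\mrD_{r^{3},2n}$ also fails, quantitatively. Reducing a general $X$ to objects of $\sfA$ by induction on amplitude stacks one $\sfT_r\dia\sfP_n$ layer per cohomological degree, so the power of $r$ and the number of projective layers grow with the amplitude of $X$, and you never land in the fixed subcategory $\sfT_{r^{3}}\dia\sfP_{2n}$. The device that makes the uniform bound possible is that \emph{every} object of $\dbcat(\sfA)$ lies in $\add(\sfA)\ast\add(\sfA)$ --- two layers, independent of amplitude --- via the triangles on cycles and boundaries, $Z^{*}(X)\to X\to\susp B^{*}(X)\to\susp Z^{*}(X)$, because complexes with zero differential are sums of shifts of objects of $\sfA$. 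Granting $\sfA\subseteq\sfT_r\dia\sfP_n$ (one application of $\mrE_{r,n}$ per layer, using the refinement $\susp^{-n}Y\in\sfA$ in Remark~\ref{rem:syzygy} together with the zero-composite octahedron above), one then computes $\sfT\subseteq(\sfT_r\dia\sfP_n)\dia(\sfT_r\dia\sfP_n)\subseteq\sfT_r\dia\sfT_{r^{2}}\dia\sfP_{2n}\subseteq\sfT_{r^{3}}\dia\sfP_{2n}$ by Lemma~\ref{lem:symmetry} and Remark~\ref{rem:ann}; no double syzygy, factorization $q=q_2q_1$, or amplitude induction is needed.
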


\begin{proof}
($\mrD'_{r,n}\Rightarrow \mrG_{r,n}$): Fix $f\colon X\to Y$ be in $\mcG^{n}$, and $P\xra{p}X\xra{q} T\to \susp P$ the exact triangle provided by the hypothesis. Consider the commutative diagram below where the morphism $X\to P$ is induced by the fact the $qr = rq=0$,
since $T$ is in $\sfT_{r}$.
\[
\xymatrixrowsep{1.5pc}
\xymatrix{
 &X\ar@{-->}[dl] \ar@{->}[r]^{q} \ar@{->}[d]^{r} & T \ar@{->}[d]^{r} \\
P\ar@{->}[r]^{p}\ar@{->}[dr]_{0} &X\ar@{->}[r]^{q} \ar@{->}[d]^{f} & T  \\
&Y } 
\]
It remains to note that the composition  $fp=0$, by Remark~\ref{rem:ghosts}.
\medskip

($\mrD_{r,n}\Rightarrow \mrG_{r,n}$) can be verified by an argument analogous to the one above.

\medskip

($\mrG_{r,n} \Rightarrow \mrD'_{r,n}$) and ($\mrG_{r,n} \Rightarrow \mrD_{r,n}$): Fix $X$ in $\sfT$ and $P\xra{\ p\ }X\xra{\ q\ } Y\to\susp P$ the exact triangle from Remark~\ref{rem:syzygy}. By hypothesis $rq=0$, so the octahedral axiom applied to the composition $rq$ gives rises to an exact triangle
\[
\susp P \lra Y \bigoplus \susp X \lra \kos rY\to \susp^{2}P\,.
\]
It remains to recall that $\kos rY$ is in $\sfT_{r}$, by  Remark~\ref{rem:koszul}, so that property $\mrD'_{r,n}$ holds. Applying the octahedral axiom to the map $qr$, which is also zero, shows that $\mrD_{r,n}$ holds as well.

\medskip

($\mrG_{r,n}\Rightarrow \mrE_{r,n}$): This holds because any morphism $f\colon A\to \susp^{n}B$, with $A,B$ in $\sfA$ is in $\mcG^{n}$; see Remark~\ref{rem:ghosts}.

\medskip

($\mrE_{r,n} \Longrightarrow \mrD_{r^{3},2n}$): For a start observe that $\sfA\subseteq\sfT_r\dia\sfP_{n}$; this follows by an argument along the lines of the one for $\mrG_{r,n} \Rightarrow \mrD'_{r,n}$ above. For a complex $X$ over $\sfA$ let $Z^{*}(X)$ and $B^{*}(X)$ denote the cycles and boundaries of $X$, respectively. There are canonical exact triangles
\begin{gather*}
Z^{*}(X) \lra X \lra \susp B^{*}(X)\lra \susp Z^{*}(X) \\
B^{*}(X) \lra Z^{*}(X) \lra H^{*}(X)\lra \susp B^{*}(X)
\end{gather*}
As $Z^{*}(X)$ and $B^{*}(X)$ are in $\add(\sfA)$, one gets the first of the following chain of inclusions
\begin{align*}
\sfT & \subseteq \sfA \dia \sfA \\
       & \subseteq (\sfT_r\dia \sfP_n) \dia (\sfT_r\dia \sfP_n)   \\
       &  \subseteq \sfT_r \dia \sfT_{r^{2}}\dia \sfP_n \dia \sfP_n \\
       &\subseteq \sfT_{r^{3}}\dia \sfP_{2n}
\end{align*}
The third inclusion holds by the associativity of $\dia$ and Lemma~\ref{lem:symmetry}. The last one holds by Remark~\ref{rem:ann}, and the definition of the $\sfP_{n}$. This is the desired implication.
\end{proof}

\subsection*{Non-zerodivisors}
Let now $\Lambda$ be a right coherent ring and $r\in \cent\Lambda$ an non-unit element in the center of $\Lambda$. The homomorphism of rings $\Lambda\to \Lambda/r\Lambda$ then induces, by restriction of scalars, an exact functor of triangulated categories
\[
\dbcat(\Lambda/r\Lambda) \lra \dbcat(\Lambda)
\]
Evidently, its image lies in the subcategory $\dbcat(\Lambda)_r$.

\begin{lemma}
\label{lem:modr}
When $r$ is a non-zerodivisor on $\Lambda$, the  functor $\dbcat(\Lambda/r\Lambda)\to\dbcat(\Lambda)_r$ is dense up to direct summands.
\end{lemma}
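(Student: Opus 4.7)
The goal is, for each $X\in\dbcat(\Lambda)_{r}$, to exhibit an object $Y\in\dbcat(\Lambda/r\Lambda)$ whose image in $\dbcat(\Lambda)$ under restriction of scalars has $X$ as a direct summand.  The natural candidate is $Y:=X\lten_{\Lambda}\Lambda/r\Lambda$, that is, the left adjoint of restriction of scalars applied to $X$.

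The first step is to identify the restriction of $Y$ to $\dbcat(\Lambda)$ with the mapping cone $\kos{r}{X}$ of $X\xra{r}X$.  Since $r$ is a non-zerodivisor, $\Lambda/r\Lambda$ has the length-one flat resolution $\Lambda\xra{r}\Lambda$ as a right $\Lambda$-module; tensoring $X$ over $\Lambda$ with this resolution realises the restriction of $Y$ as $\kos{r}{X}$, and in particular shows $Y$ is bounded.  The second step is a splitting argument: the condition $X\in\dbcat(\Lambda)_{r}$ says precisely that $r\colon X\to X$ is zero in $\dbcat(\Lambda)$, so the exact triangle $X\xra{r}X\to\kos{r}{X}\to\susp X$ degenerates to give $\kos{r}{X}\cong X\oplus\susp X$.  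Combining the two steps, $X$ is a direct summand of the restriction of $Y$, which is the required density statement.

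The main obstacle I foresee is checking that $Y$ actually lies in $\dbcat(\Lambda/r\Lambda)$: boundedness is immediate from the length-one resolution, but one must verify that each $H^{i}(Y)$ is finitely presented over $\Lambda/r\Lambda$.  The spectral sequence associated to the above resolution presents $H^{i}(Y)$ as an extension of $H^{i}(X)/rH^{i}(X)$ by the $r$-torsion submodule of $H^{i+1}(X)$, both of which are finitely presented over $\Lambda/r\Lambda$ as soon as that ring is right coherent.  This bookkeeping appears to be the only real subtlety; the essential derived-category content is the two-step argument above.
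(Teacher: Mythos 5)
Your argument is correct and is essentially the paper's own proof: the paper applies $X\lten_{\Lambda}-$ to the exact triangle $\Lambda\xra{r}\Lambda\to\Lambda/r\Lambda\to\susp\Lambda$ (using that $r$ is a non-zerodivisor to identify $\kos r\Lambda$ with $\Lambda/r\Lambda$), and then splits the resulting triangle using $r=0$ on $X$, exactly as you do. The finite-presentation worry at the end resolves without assuming $\Lambda/r\Lambda$ coherent: both $H^{i}(X)/rH^{i}(X)$ and the $r$-torsion of $H^{i+1}(X)$ are finitely presented over $\Lambda$ by coherence of $\Lambda$, and a finitely presented $\Lambda$-module killed by $r$ is automatically finitely presented over $\Lambda/r\Lambda$ (reduce a $\Lambda$-presentation modulo $r$), so their extension is too.
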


\begin{proof}
Since $r$ is  a non-zerodivisor on $\Lambda$, the canonical map $\kos r\Lambda \to H^{0}(\kos r\Lambda) \cong \Lambda/r\Lambda$ is a quasi-isomorphism in $\dbcat(\Lambda)$. This gives rise to an exact triangle 
\[
\Lambda \xra{\ r\ } \Lambda \lra \Lambda/r\Lambda \lra \susp \Lambda\,.
\]
 For any $X\in\dbcat(\Lambda)_r$, applying $X \otimes_\Lambda^{\bf L}-$ yields an exact triangle 
\[
X \xra{\ r\ } X \lra  X \otimes_{\Lambda}^{\bf L} (\Lambda/r\Lambda) \lra \susp X\,.
\]
Since the first morphism in this triangle is zero, one gets an isomorphism
\[
X \otimes_{\Lambda}^{\bf L} (\Lambda/r\Lambda) \cong X\oplus\susp X\,.
\]
Note that $X \otimes_{\Lambda}^{\bf L} (\Lambda/r\Lambda)$ is in the image of the functor $\dbcat(\Lambda/r\Lambda)\to\dbcat(\Lambda)$.
\end{proof}

\subsection*{Dimension}
Recall that the \emph{dimension} of a triangulated category $\sfT$, denoted $\dim\sfT$, is the least non-negative integer $d$ for which there exists an object $G$ such that  $\{G\}^{(d+1)\dia}=\sfT$; see \cite{Rouquier08a}*{Definition 3.2}.

The result below justifies the inequality stated in the introduction. Recall that $\mcG$ denotes the class of ghosts in $\dbcat(\Lambda)$.

\begin{corollary}
\label{cor:dim}
Let $\Lambda$ be a right coherent ring. If $r\in\cent\Lambda$ is a non-zerodivisor on $\Lambda$ and satisfies $r\mcG^{n}=0$ for some non-negative integer $n$, then there is an inequality
\[
\dim \dbcat(\Lambda) \leq \dim \dbcat(\Lambda/r\Lambda) + n 
\]
\end{corollary}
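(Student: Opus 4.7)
The plan is to combine the decomposition $\dbcat(\Lambda)=\dbcat(\Lambda)_{r}\dia\{\Lambda\}^{n\dia}$, supplied by Theorem~\ref{thm:main} from the hypothesis $r\mcG^{n}=0$, with the density statement of Lemma~\ref{lem:modr} to transfer a generator of $\dbcat(\Lambda/r\Lambda)$ to a generator of $\dbcat(\Lambda)_{r}$, and then append $\Lambda$ to it.

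Set $d:=\dim\dbcat(\Lambda/r\Lambda)$ and choose $G\in\dbcat(\Lambda/r\Lambda)$ with $\{G\}^{(d+1)\dia}=\dbcat(\Lambda/r\Lambda)$. Let $G'$ denote the image of $G$ under the restriction functor $\dbcat(\Lambda/r\Lambda)\to\dbcat(\Lambda)$. Since this functor is exact, it carries exact triangles to exact triangles and preserves finite direct sums and shifts, so it sends $\{G\}^{(d+1)\dia}$ into $\{G'\}^{(d+1)\dia}$ computed inside $\dbcat(\Lambda)$. By Lemma~\ref{lem:modr}, every $X\in\dbcat(\Lambda)_{r}$ is a direct summand of an object in the image of this functor, hence of an object of $\{G'\}^{(d+1)\dia}$; since $\dia$ is defined to be closed under retracts, this gives
\[
\dbcat(\Lambda)_{r}\subseteq\{G'\}^{(d+1)\dia}.
\]

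Now take $H:=G'\oplus\Lambda$. Because $\{G'\}$ and $\{\Lambda\}$ are both contained in $\add\{H\}$, one has $\{G'\}^{k\dia}\subseteq\{H\}^{k\dia}$ and $\{\Lambda\}^{k\dia}\subseteq\{H\}^{k\dia}$ for every $k\ge0$. Theorem~\ref{thm:main} provides the equivalence $\mrG_{r,n}\Rightarrow\mrD_{r,n}$, so
\[
\dbcat(\Lambda)=\dbcat(\Lambda)_{r}\dia\{\Lambda\}^{n\dia}\subseteq\{G'\}^{(d+1)\dia}\dia\{\Lambda\}^{n\dia}\subseteq\{H\}^{(d+1)\dia}\dia\{H\}^{n\dia}=\{H\}^{(d+n+1)\dia},
\]
the last equality being the associativity of $\dia$ recorded in Definition~\ref{def:diamond}. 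Hence $\dim\dbcat(\Lambda)\le d+n$, as required.

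The only non-routine step is the functorial transfer in the middle paragraph: one must verify that the restriction functor actually sends $\{G\}^{(d+1)\dia}$ into $\{G'\}^{(d+1)\dia}$, and then that density \emph{up to direct summands} together with closure of $\dia$-categories under $\add$ is enough to pull $\dbcat(\Lambda)_{r}$ inside $\{G'\}^{(d+1)\dia}$; everything else is formal manipulation of the operation $\dia$.
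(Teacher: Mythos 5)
Your proof is correct and follows essentially the same route as the paper: invoke $\mrG_{r,n}\Rightarrow\mrD_{r,n}$ from Theorem~\ref{thm:main}, use Lemma~\ref{lem:modr} together with retract-closure of $\dia$ to replace $\dbcat(\Lambda)_{r}$ by (the image of) $\dbcat(\Lambda/r\Lambda)$, and then absorb everything into powers of the single generator $G'\oplus\Lambda$. The paper states this more tersely, while you spell out the functorial transfer of $\{G\}^{(d+1)\dia}$ under the exact restriction functor; the substance is identical.
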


\begin{proof}
Part of the hypothesis is that $\dbcat(\Lambda)$ satisfies condition $\mrG_{r,n}$, in notation of Theorem~\ref{thm:main}.  Keeping in mind Lemma~\ref{lem:modr} and that $\proj \Lambda=\add \Lambda$,  \emph{op.cit.}\ yields 
\[
\dbcat(\Lambda) = \dbcat(\Lambda/r\Lambda) \dia \{\Lambda\}^{n\dia}\,.
\]
We have identified $\dbcat(\Lambda/r\Lambda)$ with its image in $\dbcat(\Lambda)$. If for some complex $F$ and integer $d$ one has
$\dbcat(\Lambda/r\Lambda)=\{F\}^{(d+1)\dia}$, then the equality above yields
\[
 \dbcat(\Lambda) = \{F\bigoplus \Lambda\}^{(d + n+1)\dia}\,.
\]
This  implies the desired inequality.
\end{proof}

\begin{bibdiv}
\begin{biblist}

\bib{Christensen98}{article}{
   author={Christensen, J. Daniel},
   title={Ideals in triangulated categories: phantoms, ghosts and skeleta},
   journal={Adv. Math.},
   volume={136},
   date={1998},
   number={2},
   pages={284--339},
   issn={0001-8708},
   review={\MR{1626856 (99g:18007)}},
   doi={10.1006/aima.1998.1735},
}

\bib{IyengarTakahashi14a}{article}{
author={Iyengar, S. B.},
author={Takahashi, R.},
title={Annihilation of cohomology and strong generators for module categories},
eprint={arXiv:1404.1476},
}

\bib{Kelly65}{article}{
   author={Kelly, G. M.},
   title={Chain maps inducing zero homology maps},
   journal={Proc. Cambridge Philos. Soc.},
   volume={61},
   date={1965},
   pages={847--854},
   review={\MR{0188273 (32 \#5712)}},
}

\bib{Rouquier08a}{article}{
   author={Rouquier, R.},
   title={Dimensions of triangulated categories},
   journal={J. K-Theory},
   volume={1},
   date={2008},
   number={2},
   pages={193--256},
   issn={1865-2433},
   review={\MR{2434186 (2009i:18008)}},
   doi={10.1017/is007011012jkt010},
}
\end{biblist}
\end{bibdiv}

\end{document}